\title{Beta Expansions for Regular Pisot Numbers}
\author{
       	Maysum Panju\thanks{Research of M. Panju was partially supported by NSERC, the UW President's Research Award, UW Undergraduate Research Internship, and the department of Pure Mathematics at the University of Waterloo.} \\
Department of Pure Mathematics \\
       	University of Waterloo
}
\date{\today}
\newtheorem{theorem}{Theorem}[section]
\newtheorem{lemma}[theorem]{Lemma}
\theoremstyle{definition}
\newtheorem{definition}[theorem]{Definition}
\begin{document}
\maketitle
\begin{abstract} 
A beta expansion is the analogue of the base 10 representation of a real number, where the base may be a non-integer. Although the greedy beta expansion of 1 using a non-integer base is in general infinitely long and non-repeating, it is known that if the base is a Pisot number, then this expansion will always be finite or periodic. Some work has been done to learn more about these expansions, but in general these expansions were not explicitly known. In this paper, we present a complete list of the greedy beta expansions of 1 where the base is any regular Pisot number less than 2, revealing a variety of remarkable patterns. We also answer a conjecture of Boyd's regarding cyclotomic co-factors for greedy expansions.
\end{abstract}

\section{Introduction}

The base 10 numbering system is firmly established as the conventional number representation scheme for common use. Familiar and well-behaved, the system is easily generalized to allow representation schemes that use a base other than 10. In particular, there are several interesting and unusual properties that arise when the base is chosen to be a non-integer. This field of study was first explored by R\'{e}nyi \cite{MR0097374} in 1957, and there have been a number of significant developments in the area since then; many of these advancements have been summarized by Hare \cite{MR2427721}. 

We begin by introducing the concept of a beta expansion, which generalizes the base 10 expansion to allow the non-integer base $\beta$.

\begin{definition}
Let $\beta \in (1, 2)$. Consider the expression 
\[ x = \sum_{i = 1}^\infty a_i \beta^{-i} \]
for some $x \in \left[ 0, 1/\left(\beta - 1\right) \right]$, where $a_i \in \{0, 1\}$. Then $a_1 a_2 a_3 \cdots$ is a \emph{beta expansion} for $x$. 
\end{definition}

In general, the base $\beta$ can be any complex number with $|\beta| > 1$; similarly, the set of usable digits can be more complicated than $\{0,1\}$. These variations, however, are not the focus of this paper. 

Unlike in the base 10 system, there are usually many different valid beta expansions for a fixed $x$ using some non-integer base $\beta$. We will therefore identify with the largest possible expansion. 

\begin{definition}
If $a_1 a_2 a_3 \cdots$ is the (lexicographically) maximal beta expansion for $x$ with base $\beta$, then the expansion $a_1 a_2 a_3 \cdots$ is said to be the \emph{greedy expansion} for $x$ with base $\beta$. 
\end{definition}

These expansions are called ``greedy'' because they can be computed efficiently using a greedy algorithm. For the remainder of this paper, we will concentrate only on greedy beta expansions, and therefore we will use the terms ``greedy expansion'' and ``beta expansion'' interchangeably.

A lot of work involving greedy expansions has been done by Parry \cite{MR0142719}, who provided the following useful characterization theorem.

\begin{theorem}
\label{greed}
Let $a_1 a_2 a_3 \cdots$ be a sequence in $\{0, 1\}^{\mathbb{N}}$. Then
this sequence is the greedy expansion of $1$ for some $\beta > 1$ if and only if for all $j \geq  1$,
\[ a_{j+1} a_{j+2} a_{j+3} \cdots <_\text{lex} a_1 a_2 a_3 \cdots.\]
\end{theorem}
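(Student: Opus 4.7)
The plan is to prove the two implications of this biconditional separately; the forward direction follows quickly from the dynamical content of the greedy algorithm, while the converse requires more care. For the ``only if'' direction I would let $r_0 = 1$ and define $r_j := \beta r_{j-1} - a_j$, so that the greedy algorithm guarantees $r_j \in [0, 1)$ for each $j \geq 1$ and $r_j = \sum_{k \geq 1} a_{j+k} \beta^{-k}$. An easy check then shows that the shifted tail $a_{j+1} a_{j+2} \cdots$ is itself the greedy expansion of $r_j$. The forward direction therefore reduces to a monotonicity lemma: if $0 \leq x < y \leq 1$, then the greedy expansion of $x$ is lexicographically smaller than that of $y$. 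This is proved by induction on the first position where the two expansions differ, noting that each greedy digit is a monotonic function of the current residual and that the gap between the corresponding residuals is multiplied by $\beta$ each step until it forces a digit difference.

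For the ``if'' direction, I would first show $a_1 = 1$: otherwise the shift condition propagates zeros forever, but then $a$ equals its own shift, contradicting strictness. Next, I would define $g(\beta) := \sum_{i=1}^\infty a_i \beta^{-i}$, continuous and strictly decreasing on $(1,2]$, with $g(2) < 1$ (the shift condition excludes the constant sequence $a_i \equiv 1$) and $g(\beta) > 1$ for $\beta$ close enough to $1$. The intermediate value theorem then produces a unique $\beta_0 \in (1, 2)$ with $g(\beta_0) = 1$, so that $a_1 a_2 \cdots$ is at least \emph{some} beta expansion of $1$ in base $\beta_0$.

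The hard step is verifying that this expansion is actually the \emph{greedy} one at $\beta_0$. I would argue by contradiction: if the true greedy expansion $b_1 b_2 \cdots$ differs, let $k$ be the first position of disagreement, so $a_k = 0$ and $b_k = 1$. Equating $\sum a_i \beta_0^{-i} = \sum b_i \beta_0^{-i} = 1$ forces the tail $a_{k+1} a_{k+2} \cdots$ to represent a number $\geq 1$ in base $\beta_0$. Deriving a contradiction is subtle, because lexicographic and numerical orders do not align for $\beta_0 \in (1, 2)$, so the shift condition $a_{k+1} a_{k+2} \cdots <_\text{lex} a_1 a_2 \cdots$ does not directly translate into the numerical inequality we need. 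The intended resolution is to apply the shift condition to \emph{every} tail of $a$ simultaneously and combine it with the forward direction (applied to the known greedy sequence $b$) to constrain the tails numerically; degenerate sequences ending in a trailing zero tail need separate handling. This numerical-versus-lexicographic step is the main obstacle to pin down.
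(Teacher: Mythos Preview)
The paper does not actually prove Theorem~\ref{greed}. It is stated as a known result due to Parry and attributed via the citation \cite{MR0142719}; the paper then \emph{uses} the theorem (and its consequence Lemma~\ref{greedyLemma}) as a tool to verify that the explicit strings in Table~\ref{expansions} are greedy. So there is no in-paper proof to compare your proposal against.

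As for your outline on its own merits: the forward direction is the standard argument and is fine. In the converse direction your intermediate-value step has a small hole you should patch: the shift condition is satisfied by the sequence $1\,0\,0\,0\cdots$, yet $g(\beta)=\beta^{-1}<1$ for every $\beta>1$, so no such $\beta$ exists. The usual fix is either to observe that this is exactly the degenerate ``$\beta=1$'' boundary (and the theorem is tacitly about sequences with at least two nonzero digits), or to allow $\beta\to 1^+$ and note that for any sequence with a second~$1$ one has $g(\beta)\to\infty$. More substantively, you correctly flag the real difficulty: once you have $\beta_0$ with $\sum a_i\beta_0^{-i}=1$, showing that $a$ is the \emph{greedy} expansion at $\beta_0$ is where the content lies. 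Your contradiction setup is right, but the resolution is not just ``subtle'' --- it is the heart of Parry's argument. The clean way through is to prove directly that the shift hypothesis forces every tail sum $\sum_{k\ge 1}a_{j+k}\beta_0^{-k}$ to lie in $[0,1)$: at the first index $m$ where $a_{j+m}<a_m$, bound the tail by $\sum_{k<m}a_k\beta_0^{-k}+\beta_0^{-m}\cdot\frac{1}{\beta_0-1}$ and use $1=\sum a_k\beta_0^{-k}$ together with $a_m=1$ to see this is at most $1$; iterating handles the degenerate eventually-zero case. That replaces your unresolved ``numerical-versus-lexicographic'' step with a concrete inequality.
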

Here, we use $<_{\text{lex}}$ to represent the lexicographic ordering; that is, we say that $a_1 a_2 a_3 \cdots <_\text{lex} b_1 b_2 b_3 \cdots$ if there is some integer $m \geq 1$ with $a_i = b_i$ for all $i < m$, and $a_m < b_m$.

For the remainder of this paper, when looking at beta expansions with base $\beta$, we will restrict our attention to the beta expansion of 1, as opposed to that of general values of $x$. We will use $d_\beta (1)$ to denote the greedy expansion of 1 using base $\beta$. It has been shown that there are specific algebraic properties of $\beta$ that can be recovered if $d_\beta(1)$ is known to terminate or repeat \cite{MR0142719}.

In the case that an expansion of 1 is finite or eventually periodic, the expansion can be encoded in the form of a polynomial, as follows:

\begin{definition}
Consider the greedy expansion $d_\beta(1) = a_1 a_2 \cdots a_n$ if the greedy expansion is finite, and $d_\beta(1) = a_1 a_2 \cdots a_k \left( a_{k+1} \cdots a_n \right)^\omega$ if the greedy expansion is eventually periodic. Define $P_j(x) = x^j - a_1 x^{j-1} - \ldots - a_j$. Then the \emph{companion polynomial} for the expansion is defined as
\[ 
  R(x) =
  \begin{cases}
    P_n(x), & \text{if } d_\beta(1) \text{ is finite;} \\
    P_n(x) - P_k(x), & \text{if } d_\beta(1) \text{ is eventually periodic.} \\
  \end{cases}
\]
\end{definition}

It is straightforward to show, using the definition of the beta expansion and of the companion polynomial, that we have $R(\beta) = 0$. Since $R(x)$ is a monic, integer polynomial, it follows that the numbers $\beta$ for which $d_\beta(1)$ is finite or eventually periodic must be algebraic integers. If the minimal polynomial for $\beta$ is $p(x)$, then we can write $R(x) = p(x)Q(x)$ for some integer polynomial $Q(x)$. We refer to the (possibly constant) polynomial $Q(x)$ as the co-factor for the expansion.

We now turn our attention to a special set of algebraic integers known as Pisot numbers. We will see that beta expansions have a close connection with Pisot numbers.

\begin{definition}
A \emph{Pisot number} is a real algebraic integer $q > 1$ such that all of the conjugates of $q$ have absolute value strictly less than 1. If $P(x)$ is a polynomial with exactly one Pisot number as a zero, that Pisot number is referred to as the \emph{Pisot root of $P(x)$}.
\end{definition}

An example of a Pisot number is the golden ratio, $\tau = (1 + \sqrt{5})/2 \approx 1.618$, which is the Pisot root of $x^2 -x-1$; here, the only conjugate is $-\tau^{-1} \approx -0.618$, which has magnitude less than 1.

It turns out that Pisot numbers behave very nicely with respect to beta expansions. For a general algebraic integer $\beta$, the expansion $d_\beta(1)$ will not necessarily terminate or repeat. However, if the base $\beta$ is a Pisot number, then the greedy expansion of 1 is guaranteed to either be finite or eventually periodic. In fact, if $\beta$ is a Pisot number, then every element of $\mathbb{Q}(\beta)$ has a finite or eventually periodic greedy expansion using $\beta$ as the base \cite{MR0447134, MR576976}. The converse of this is also true.

Although it has long been understood that the Pisot numbers give eventually periodic greedy expansions of 1, the expansions themselves were not explicitly known. In this paper, we present a complete description of these expansions for a large subset of Pisot numbers in the interval $(1, 2)$. However, before we look at the expansions, it is necessary to understand the structure of the set of Pisot numbers in this interval.

It is known that the set of Pisot numbers is closed, and thus contains all of its limit points. Amara \cite{MR0237459} has completely determined all of the limit points in the set of Pisot numbers less than 2.

\begin{theorem}
\label{limits}
The limit points of the Pisot numbers in the interval $(1,2)$ are the following:
\[ \phi_1 = \psi_1 < \phi_2 < \psi_2 < \phi_3 < \chi < \psi_3 < \phi_4 < \cdots < \psi_r < \phi_{r+1} < \cdots < 2 \]
where
\begin{itemize}
\item the minimal polynomial of $\phi_r$ is $\Phi_r(x)= x^{r+1} -2x^r +x -1$,
\item the minimal polynomial of $\psi_r$ is $\Psi_r(x)= x^{r+1} - x^r - \ldots - x -1$, and
\item the minimal polynomial of $\chi$ is $\mathcal{X}(x)= x^4 -x^3 -2x^2 +1$.
\end{itemize}
\end{theorem}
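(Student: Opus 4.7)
The plan is to prove Amara's characterization in three stages: (A) show that each of the listed numbers is a limit of Pisot numbers, (B) verify the claimed linear ordering, and (C) show that nothing else in $(1,2)$ is a Pisot limit point.

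For (A), I would first verify that each of $\Phi_r$, $\Psi_r$, and $\mathcal{X}$ is irreducible and has a unique real root in $(1,2)$ with all conjugates strictly inside the open unit disk. The first half is handled by direct sign evaluation at $x=1$ and $x=2$; the second by Rouch\'{e}-type estimates on the unit circle, or, when convenient, by factoring out the palindromic/antipalindromic part of the polynomial. Once $\eta \in \{\phi_r, \psi_r, \chi\}$ is known to be Pisot with minimal polynomial $P_\eta$, I would invoke Salem's classical construction: the polynomials $x^n P_\eta(x) \pm P_\eta^{*}(x)$ (with $P_\eta^{*}$ the reciprocal) have Pisot roots that converge to $\eta$ from both sides as $n \to \infty$. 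This exhibits each listed number as a two-sided limit of Pisot numbers. For $\psi_r$ an even more concrete family is available from the multinacci-like polynomials $x^{n+r+1} - x^{n+r} - \cdots - x^{n} - x^{r-1} - \cdots - 1$.

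For (B), the chain $\phi_1 = \psi_1 < \phi_2 < \psi_2 < \phi_3 < \chi < \psi_3 < \cdots < 2$ is a sequence of finitely-supported algebraic inequalities indexed by $r$. I would verify each link by evaluating $\Phi_{r+1}$ at $\psi_r$, $\Psi_r$ at $\phi_r$, and $\mathcal{X}$ at $\phi_3, \psi_3$ and reading off the sign. That $\phi_r, \psi_r \to 2$ follows from a continuity argument applied to the explicit polynomials, since $\Phi_r(2), \Psi_r(2) \to $ constant while the leading behaviour forces roots just below $2$.

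The main obstacle is (C), the completeness claim. My approach would be to leverage Parry's theorem together with the fact that the Pisot set is closed. Given Pisot $\beta_n \to \beta \in (1,2)$, the greedy expansions $d_{\beta_n}(1)$ must agree on prefixes of unbounded length, producing a limiting sequence $s^{*} \in \{0,1\}^{\mathbb{N}}$ which is either $d_\beta(1)$ itself (one-sided limit from below) or a left-shifted variant (from above). Admissibility of $s^{*}$ in the lexicographic sense of Theorem~\ref{greed}, combined with the algebraic constraint that $\beta$ be Pisot and the observation that the approximants $\beta_n$ impose extra reciprocal structure on the limiting companion polynomial, severely restricts the possible shapes of $s^{*}$. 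A case analysis on the initial run of ones in $s^{*}$ and on how the expansion eventually becomes periodic should then recover exactly the two families $\phi_r, \psi_r$, with $\chi$ emerging as the unique ``crossover'' where the $\Phi$- and $\Psi$-type constructions collide and produce the same companion polynomial up to a self-reciprocal correction. Isolating $\chi$ as the only exceptional limit, and in particular ruling out Salem numbers as additional accumulation points below $2$, is the most delicate combinatorial step and is where (as in Amara's original treatment) the bulk of the technical work would lie.
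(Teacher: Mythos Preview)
The paper does not prove this theorem. Theorem~\ref{limits} is quoted as a known result of Amara, with a citation, and is used as background for the classification of regular Pisot numbers; there is no proof of it anywhere in the paper to compare your proposal against.

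That said, a brief comment on your sketch is warranted. Parts (A) and (B) are routine and your outline is adequate. Part (C), the completeness assertion, is where the content lies, and your proposal there is not really a proof strategy: saying that ``a case analysis on the initial run of ones \ldots\ should then recover exactly the two families'' is a hope, not an argument. In particular, Amara's original proof does not go through Parry's admissibility condition or beta expansions at all; it relies on the Dufresnoy--Pisot analytic machinery (compactness in spaces of rational Schur-type functions, control of the meromorphic functions $P(x)/A(x)$ associated to Pisot numbers, and a classification of the possible limiting functions). The beta-expansion route you describe would need a genuinely new argument to rule out extraneous limit points, and nothing in your sketch explains how the ``extra reciprocal structure on the limiting companion polynomial'' forces the answer to be exactly $\{\phi_r,\psi_r,\chi\}$ rather than some larger list. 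If you want to reprove Amara's theorem, you should either follow the Dufresnoy--Pisot/Amara analytic approach or supply a concrete mechanism for the exclusion step; as written, part (C) has a gap.
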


The sequences of Pisot numbers that approach these limit points were first described by Talmoudi \cite{MR516773}, and are also completely understood. The Pisot numbers in these sequences are the roots of the polynomials listed in Table~\ref{regulars}. As $n$ tends to infinity, the Pisot root of each polynomial in the table approaches the corresponding limit point. 

\renewcommand{\arraystretch}{2}
\begin{table}
\caption{Defining Polynomials for Regular Pisot Numbers} \label{regulars}

\begin{center}
\begin{tabular}{ll}
\hline 

\hline
Limit Point & Defining Polynomials \\ 
\hline 
 $\phi_r$ & $\Phi_{A(r,n)}^\pm (x) = \Phi_r(x)x^n \pm \left(x^r -x^{r-1} +1 \right)$ \\ 
 & $\Phi_{B(r,n)}^\pm (x) = \Phi_r(x)x^n \pm \left(x^r -x +1 \right)$ \\ 
 & $\Phi_{C(r,n)}^\pm (x) = \Phi_r(x)x^n \pm \left(x^r+1 \right)\left(x -1 \right)$ \\ 
 $\psi_r$ & $\Psi_{A(r,n)}^\pm (x) = \Psi_r(x)x^n \pm \left(x^{r+1} -1 \right)$ \\ 
 & $\Psi_{B(r,n)}^\pm (x) = \Psi_r(x)x^n \pm \left(x^r -1 \right) / \left(x-1 \right)$ \\ 
 $\chi$ & $\mathcal{X}_{A(n)}^\pm (x) = \mathcal{X}(x)x^n \pm \left( x^3 +x^2 -x-1 \right)$ \\ 
 &  $\mathcal{X}_{B(n)}^\pm (x) = \mathcal{X}(x)x^n \pm \left( x^4 -x^2 +1 \right)$\\
 \hline
 
 \hline 
\end{tabular}
\end{center}
\end{table} 

Note that in general, the polynomials in Table~\ref{regulars} are not irreducible; they may have some cyclotomic factors. However, the polynomials only admit a single zero larger than 1, which is the Pisot number. 

It turns out that the Pisot roots of the polynomials in these sequences are the only Pisot numbers that can get arbitrarily close to one of the limit points mentioned in Theorem~\ref{limits}.
 
\begin{definition}
A \emph{regular Pisot number} in the interval $(1, 2)$ is any Pisot number that is the root of one of the polynomials described in Theorem~\ref{limits} or listed in Table~\ref{regulars}. That is, the regular Pisot numbers are the limit points $\phi_r$, $\psi_r$, and $\chi$, as well as any of the Pisot numbers in the sequences that approach them, as outlined in Table~\ref{regulars}. A Pisot number that is not regular is called \emph{irregular}.
\end{definition}

It is known that for any $\varepsilon > 0$, there are only finitely many irregular Pisot numbers in $[1, 2-\varepsilon]$. Thus, the regular Pisot numbers comprise a significant portion of the Pisot numbers less than 2. Now that we have a full description of what the regular Pisot numbers are, we can look at the beta expansions of 1 associated with these Pisot numbers.

\section{Results}
\label{results}

The main contribution of this paper is the complete description of the expansion $d_\beta(1)$, where the base $\beta$ is any regular Pisot number in the interval $(1,2)$. This work extends the contributions given by Boyd \cite{MR1325863}, Bassino \cite{MR1966122}, Gjini \cite{MR2024974},and Hare and Tweedle \cite{MR2444222}. Using the classifications given by Amara \cite{MR0237459} and Talmoudi \cite{MR516773}, every regular Pisot number in the interval $(1,2)$ is the zero of some polynomial described in Theorem~\ref{limits} or listed in Table~\ref{regulars}. The greedy expansions of 1 using most of these Pisot numbers are relatively straightforward, and are summarized in Section~\ref{simple}. The remaining expansions are more complex in nature, and are discussed individually in Section~\ref{complicated}. Graphical representations for all of the expansions are presented in Section~\ref{graphics}.

Each beta expansion given is accompanied by an associated pseudo co-factor. This is the rational function $Q(x)$ such that $R(x) = P(x)Q(x)$, where $R(x)$ is the companion polynomial of the expansion and $P(x)$ is the (not necessarily minimal) polynomial with the Pisot number as a root, as shown in Theorem~\ref{limits} and Table~\ref{regulars}. In general, $Q(x)$ might not be a polynomial, as it may include factors of $1/g(x)$ for some cyclotomic factor $g(x)$ of $P(x)$. If $P(x)$ is an irreducible polynomial, then the pseudo co-factor for the corresponding expansion would be the same as the ordinary co-factor as described earlier.

\subsection{Simple Expansions}
\label{simple}

The beta expansions of 1 using most of the regular Pisot numbers are summarized below in Table~\ref{expansions}. The expansions are organized by the polynomials which have the regular Pisot numbers as roots. The expansions also depend on the values of $r$ and $n$, the parameters for the polynomials in Table~\ref{regulars}. 

\begin{landscape}

\footnotesize
\renewcommand{\arraystretch}{2.6}
\begin{center}
\begin{longtable}{lllc}
\caption{Beta Expansions for Regular Pisot Numbers} \label{expansions} \\

\hline

 \hline
  \multicolumn{1}{l}{\textbf{Polynomial}} & \multicolumn{1}{l}{\textbf{Expansion}} & \multicolumn{1}{l}{\textbf{Restriction}} & \multicolumn{1}{c}{\textbf{Pseudo Co-Factor}}\\ \hline 
\endfirsthead

\multicolumn{4}{c}%
{{ \tablename\ \thetable{} -- continued from previous page}} \\
\hline \multicolumn{1}{l}{\textbf{Polynomial}} &
\multicolumn{1}{l}{\textbf{Expansion}} &
\multicolumn{1}{l}{\textbf{Restriction}} &
\multicolumn{1}{c}{\textbf{Pseudo Co-Factor}} \\ \hline 
\endhead

 \multicolumn{4}{r}{{Continued on next page}} \\ 
\endfoot

\endlastfoot



$\Phi_{r}(x)$&
  $\displaystyle{1^r 0^{r-1} 1}$ &
  $r \geq 1$ &
  $\displaystyle{\frac{x^r - 1}{x-1}}$  \\  
  
\hline
$\Psi_{r}(x)$&
  $\displaystyle{1^{r+1}}$ &
  $r \geq 1$ &
  $\displaystyle{1}$  \\  
  
\hline
$\mathcal{X}(x)$&
  $\displaystyle{11 (10)^\omega}$ &
  NA &
  $\displaystyle{1}$  \\  

\hline
\multirow{4}{*}{$\Phi_{A(r,n)}^-(x)$}&
  Pisot root not in $(1,2)$ &
  $1 \leq n \leq r-1$ &
  None \\*
 &
  $\displaystyle{1^r 0^{n-r} 1 0^{2r-n-2} 1 0^{n-r+1} 1^{r-1}}$ &
  $r \leq n \leq 2r - 2$ &
  $\displaystyle{\frac{x^r - 1}{x-1}}$  \\*
  
& $1^r 0^{r-2} 1 0^{r} 1^{r-1}$ &
  $n = 2r-1$ &
  $\displaystyle{\frac{x^{2r-1}-1}{(x-1)(x^r+1)}}$  \\*
  
& $1^r 0^{r-1} 1 0^{n-2r} 1 0^{r} 1^{r-1}$ &
  $n \geq 2r$ &
  $\displaystyle{\frac{x^r - 1}{x-1}}$  \\

\hline
\multirow{5}{*}{$\Phi_{A(r,2rk+j)}^+(x)$}  & 
  No Pisot root & 
  $r + (2rk+j) \leq 3$ &
  None \\* 

  &
  See Section~\ref{phiAp} &
  $1 \leq j \leq r-2$ and $k = 0$ &
  Not shown here \\*

  &
  $(1^r 0^r)^k 1^j 0 1^{r-j-2} 0 1^{j+1} 0^r (1^r 0^r )^{k-1} 1^{r-1}$ &
  $0 \leq j \leq r-2$ and $k \geq 1$ &
  $\displaystyle{\frac{x^{r(2k+1)}+1}{(x-1)(x^r +1)}}$  \\*

 &
  $(1^r 0^r)^k 1^{r-1}$ &
  $j = r-1$ &
  $\displaystyle{\frac{1}{(x-1)(x^r+1)}}$  \\* 

  &
  $(1^r 0^r)^k 1^{r-1} 0 1^{j-r} 0 (1^r0^r)^k 1^{r-1}$ &
  $r \leq j \leq 2r-1$ &
  $\displaystyle{\frac{x^{r(2k+1)}+1}{(x-1)(x^r +1)}}$  \\

\hline
\multirow{4}{*}{$\Phi_{B(r,n)}^-(x)$}  & 
  Pisot root not in $(1,2)$ & 
  $n \leq r-1$ &
  None  \\* 

  &
  $1^{2r} 0^{r-1} 1$ &
  $n = r$ &
  $\displaystyle{\frac{x^r -1}{x-1}}$ 	\\* 

  &
  $1^{r+1} 0^r 1$ &
  $n = r+1$ &
  $\displaystyle{\frac{x^{r+1} -1}{(x-1)(x^r+1)}}$   \\*
  &
  See Section~\ref{phiBm} &
  $n \geq r+2$ &
  Not shown here  \\

\hline
\multirow{3}{*}{$\Phi_{B(r,2rk+j)}^+(x)$}  & 
  $(1^r 0^r)^k 0^r 1^{r-1} 0^r (1^r 0^r)^{k-1} 1$ & 
  $j = 0$ &
  $\displaystyle{\frac{x^{r(2k+1)}+1}{(x-1)(x^r + 1)}}$  	\\* 

  &
  $(1^r 0^r)^k 1$ &
  $j = 1$ &
  $\displaystyle{\frac{1}{(x-1)(x^r + 1)}}$  	\\* 
  
  &
  See Section~\ref{phiBp} &
  $2  \leq j \leq 2r -1$ &
  Not shown here 	\\

\hline
\multirow{4}{*}{$\Phi_{C(r,n)}^-(x)$}  & 
  Pisot root not in $(1,2)$ & 
  $0 \leq n \leq r$ &
  None 	\\* 

  &
  $1^r 0^{n-r-1} 1 \left( 0^{2r-n} 1^r 0^{n-r} \right) ^\omega$ &
  $r+1 \leq n \leq 2r - 1$ &
  $\displaystyle{\frac{x^r-1}{x-1}}$ 	\\* 

 &
  $1^r 0^{r-2} 1 \left(0^{r+1} 1^{r-2} 0^{r} 10 1^r 0^{r-2} \right)^\omega$ &
  $n = 2r$ &
  $\displaystyle{\frac{(x^{2r-1} - 1)(x^{3r} + 1)}{(x - 1)(x^r + 1)}}$ \\* 

 &
  $1^r 0^{r-1} 1 \left(0^{n-2r} 1^r 0^r \right)^\omega$ &
  $n \geq 2r + 1$ &
  $\displaystyle{\frac{x^r-1}{x-1}}$ 	\\

\hline
\multirow{4}{*}{$\Phi_{C(r,2rk+j)}^+(x)$}  & 
  See Section~\ref{phiCp} & 
  $j < r$ and $k = 0$ &
  Not shown here 	\\* 
  
 &
  $(1^r 0^r)^k 0^{2rk-1} 1$ & 
  $j = 0$ and $k \geq 1$ &
  $\displaystyle{\frac{x^{2rk} - 1}{(x - 1)(x^r + 1)}}$ 	\\* 

 &
  $(1^r 0^r)^k 1^{j-1} 0 1^{r-j} 0^{r-1} 1 0^{(2rk+j)-1} 1 $ &
  $1 \leq j < r$ and $k \geq 1$, or $j = r$ &
  $\displaystyle{\frac{x^{2r(k+1)} - 1}{(x - 1)(x^r + 1)}}$ \\* 

 &
  $(1^r 0^r)^k 1^{r-1} 0 1^{j-r} (0^r 1^r)^k 0^{j-1} 1 0^{2rk+r-1} 1 $ &
  $r+1 \leq j \leq 2r-1$ &
  $\displaystyle{\frac{(x^{r(2k+1)} - 1)(x^{(2rk+j)} - 1)}{(x - 1)(x^{r} + 1)}}$ 	\\

\hline
\multirow{2}{*}{$\Psi_{A(r,n)}^-(x)$}  & 
  Pisot root not in $(1,2)$ & 
  $0 \leq n \leq r$ &
  None 	\\* 

 &
  $1^{r+1} \left( 0^{n-r-1} 1^r 0 \right)^{\omega} $ &
  $n \geq r +1$ &
  $1$ 	\\

\hline
  $\Psi_{A(r,n)}^+(x)$   & 
  See Section~\ref{psiAp} & 
  $n,r \geq 1$ &
  Not shown here  \\

\hline
\multirow{2}{*}{$\Psi_{B(r,n)}^-(x)$}  & 
  Pisot root not in $(1,2)$ & 
  $0 \leq n \leq r-1$ &
  None 	\\* 

 &
  $1^{r+1} 0^{n-r} 1^r$ &
  $n \geq r$ &
  $1$ 	\\  
  
\hline
  $\Psi_{B(r,n)}^+(x)$   & 
  See Section~\ref{psiBp} & 
  $n,r \geq 1$ &
  Not shown here 	\\

\hline
\multirow{3}{*}{$\mathcal{X}_{A(n)}^-(x)$}  & 
  Pisot root not in $(1,2)$ & 
  $1 \leq n \leq 3$ &
  None 	\\* 

 &
  $11 (10)^{k-2} 11011 ( (10)^{k-2} 0111 (01)^{k-2} 1000)^\omega$ &
  $n = 2k$ and $k \geq 2$ &
  $\displaystyle{\frac{(x^n - 1)(x^{n+1} + 1)}{x^2 - 1}}$ \\*    

 &
  $11 (10)^{k-2} 11 (00011 (10)^{k-2} 00)^\omega$ &
  $n = 2k+1$ and $k \geq 2$ &
  $\displaystyle{\frac{x^n - 1}{x^2 - 1}}$ 	\\

\hline
\multirow{5}{*}{$\mathcal{X}_{A(n)}^+(x)$}  & 
  $1001001$ &
  $n = 1$ &
  $\displaystyle{x^2 + 1}$ 	\\* 

 &
  $11$ &
  $n = 2$ &
  $\displaystyle{\frac{x^2 + 1}{x^{6} + 1}}$ \\*    

 &
  $(110)^2 10 (100)^2 1011$ &
  $n = 4$ &
  $\displaystyle{\frac{x^{11} - 1}{x - 1}}$ \\*    

 & 
  $11 (10)^{k-1} 01000 (10)^{k-1} 0 (00)^{k} 11$ &
  $n = 2k+1$ and $k \geq 1$ &
  $\displaystyle{\frac{(x^n + 1)(x^{n+1} - 1)}{x^2 - 1}}$ \\*    

 &
  $11 (10)^{k-2} 0111000 (10)^{k-3} 000010 (00)^{k-2} 11$ &
  $n = 2k$ and $k \geq 3$ &
  $\displaystyle{\frac{(x^{n-1} + 1)(x^{n+2} - 1)}{x^2 - 1}}$ 	\\

\hline
\multirow{5}{*}{$\mathcal{X}_{B(n)}^-(x)$}  & 
  Pisot root not in $(1,2)$ & 
  $1 \leq n \leq 3$ &
  None 	\\* 

 &
  $1^6 0^5 1$ &
  $n = 4$ &
  $\displaystyle{x^4 + x^2 + 1}$ \\*    

 &
  $1^4 00 1^4 0^5 1$ &
  $n = 5$ &
  $\displaystyle{x^7 + x^5 + x^2 + 1}$ \\*    

 &
  $11 (10)^{k-3} 1100000 (10)^{k-3} 001$ &
  $n = 2k$ and $k \geq 3$ &
  $\displaystyle{\frac{x^{n-2} - 1}{x^2 - 1}}$ \\*    

 &
  $11 (10)^{k-2} 1101000 (10)^{k-3} 011 (1 (00)^{k-1} 10)^\omega$ &
  $n = 2k+1$ and $k \geq 3$ &
  $\displaystyle{\frac{x^{2n-2} -x^{n-1} -x^{n-3} + 1}{x^2 - 1}}$ 	\\

\hline
\multirow{5}{*}{$\mathcal{X}_{B(n)}^+(x)$}  & 
  $10001$ &
  $n = 1$ &
  $\displaystyle{\frac{x^2 -x + 1}{x^{2} - 1}}$ 	\\* 

 &
  $101000101$ &
  $n = 2$ &
  $\displaystyle{\frac{x^5 + 1}{x^{2} - 1}}$ \\*    

 &
  $110101 (0 (1100)^2 00100 )^\omega$ &
  $n = 4$ &
  $\displaystyle{\frac{x^6(x^5+2)(x^2-x+1)-x^5-x^9}{x - 1}+x^2+1}$ \\*    

 &
  $11 (10)^{k-1} 001$ &
  $n = 2k+1$ and $k \geq 1$ &
  $\displaystyle{\frac{1}{x^2 - 1}}$ \\*    

 &
  $11 (10)^{k-2} 0101  (1 (10)^{k-3} (011)^2 (10)^{k-3} 010^4 100)^\omega$ &
  $n = 2k$ and $k \geq 3$ &
  $\displaystyle{\frac{x^4 (x^{n-1} + 1)(x^{2} -x+ 1)+x^{2n+4}-1}{x^2 - 1}}$ 	\\
  
\hline

\hline

\end{longtable}
\end{center}
\end{landscape}

The proof of correctness for each of the expansions in the above table are all similar. It should therefore be sufficient to show the proof for one of the expansions, as the rest can be verified using nearly identical arguments. 

We begin by proving the following useful lemma, which can be easier to apply than Theorem~\ref{greed} when checking that an expansion is greedy.

\begin{lemma}
\label{greedyLemma}
Let $a_1 \cdots a_n$ be a sequence in $\{0,1\}^\mathbb{N}$, and let $X = a_1 \cdots a_{i-1}$ be a leading proper subsequence, where $a_i = 0$ and $a_j =1$ for some $i < j \leq n$. Suppose that $X \geq_{\text{lex}} a_k \cdots a_{i-1} 1$ for every $k$ such that $2 \leq k \leq i-1$, and that $X \geq_{\text{lex}} a_k \cdots a_n$ for every $k$ such that $i \leq k \leq n$. Then $a_1 \cdots a_n$ represents the greedy expansion of $1$ using some base $\beta$.
\end{lemma}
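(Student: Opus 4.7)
My plan is to invoke Parry's criterion (Theorem~\ref{greed}) on the infinite sequence $A$ obtained by padding $a_1 a_2 \cdots a_n$ with trailing zeros, and verify $\sigma^k(A) <_{\text{lex}} A$ for every $k \geq 1$ (where $\sigma$ denotes the shift) through a case analysis on $k$ that mirrors the two hypothesis ranges.

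The trivial range is $k \geq n$: then $\sigma^k(A) = 0^\omega$, and the assumption that some $a_j = 1$ makes $A$ strictly larger. For $1 \leq k \leq i - 2$, I apply the first hypothesis at index $k+1 \in \{2,\ldots,i-1\}$ to get $X \geq_{\text{lex}} a_{k+1} \cdots a_{i-1}\,1$, read as a comparison of zero-padded infinite words. Since $a_i = 0$, the shift $\sigma^k(A)$ agrees with $a_{k+1} \cdots a_{i-1}\,1\,0^\omega$ on its first $i-k-1$ letters but carries a $0$ at position $i-k$ where the latter carries a $1$, so chaining gives $\sigma^k(A) <_{\text{lex}} a_{k+1} \cdots a_{i-1}\,1\,0^\omega \leq_{\text{lex}} X\,0^\omega \leq_{\text{lex}} A$.

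For $i-1 \leq k \leq n-1$, the second hypothesis at index $k+1 \in \{i,\ldots,n\}$ gives $X\,0^\omega \geq_{\text{lex}} a_{k+1} \cdots a_n\,0^\omega = \sigma^k(A)$. The full sequence $A$ strictly beats $X\,0^\omega$: they agree on their first $i-1$ letters and their position-$i$ entries are both zero, but the non-triviality hypothesis supplies a $1$ in $A$ somewhere past position $i$ where $X\,0^\omega$ carries a $0$. Hence $A >_{\text{lex}} X\,0^\omega \geq_{\text{lex}} \sigma^k(A)$.

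The main subtlety I expect is fixing the right convention for the lex comparisons in the hypotheses: $X$ has length $i-1$, while each right-hand side $a_k \cdots a_{i-1}\,1$ or $a_k \cdots a_n$ may be shorter or longer, so reading both sides as infinite zero-padded words is what keeps the argument honest. Once this convention is adopted, every case reduces to a direct letter-by-letter comparison; the appended $1$ in the first hypothesis supplies the strict tiebreaker in the interior range, while the non-triviality condition plays the analogous role in the suffix range.
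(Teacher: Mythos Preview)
Your proof is correct and follows essentially the same route as the paper's: a two-case analysis on the shift index (before versus at-or-after position $i$), using the appended $1$ in the first hypothesis for strictness in the interior range and the non-triviality $a_j=1$ for strictness in the suffix range. The only cosmetic differences are your explicit zero-padding to infinite words and the extra trivial case $k \geq n$, both of which the paper leaves implicit by working directly with finite strings.
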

\begin{proof}

It is sufficient to show that the sequence $a_1 \cdots a_n$ satisfies the condition of Theorem~\ref{greed}; that is, we need to show that $a_1 \cdots a_n >_{\text{lex}} a_k \cdots a_n$ for all $2 \leq k \leq n$. 

Indeed, if $2 \leq k \leq i -1$, then
\begin{eqnarray*}
a_1 \cdots a_n &=& X a_i \cdots a_n \\
& >_{\text{lex}} & X \\
& \geq_{\text{lex}} & a_k \cdots a_{i-1} 1 \\
& >_{\text{lex}} & a_k \cdots a_{i-1} 0 a_{i+1} \cdots a_n \\
& = & a_k \cdots a_n 
\end{eqnarray*}
and if $i \leq k \leq n$, then
\begin{eqnarray*}
a_1 \cdots a_n &=& X a_i \cdots a_n \\
& >_{\text{lex}} & X \\
& \geq_{\text{lex}} & a_k \cdots a_n 
\end{eqnarray*}
This proves the lemma.
\end{proof} 

We are now ready to prove the correctness of the expansions presented in Table~\ref{expansions}. We will focus on one of the expansions based on the root of the polynomial $\Phi_{A(r,n)}^+(x)$.

It is possible to write $n$ as $n=(2r)k+j$ for some unique choice of $k\geq 0$ and $0 \leq j \leq 2r-1$. Consider the case where $r \leq j \leq 2r-1$. As seen in Table~\ref{expansions}, the beta expansion for the Pisot root of this polynomial is
\[(1^r 0^r)^k 1^{r-1} 0 1^{j-r} 0 (1^r 0^r)^k 1^{r-1} \]

\begin{proof}
The fact that the expansion is greedy follows directly from Lemma~\ref{greedyLemma}, using the prefix $X = \left(1^r 0^r \right)^k 1^{r-1}$.

It remains to show that the expansion applies to the specified class of Pisot numbers. The expansion is valid for all roots of the corresponding companion polynomial $R(x)$; therefore, it is enough to show that $\Phi_{A(r,n)}(x)$ divides $R(x)$, as this guarantees that the Pisot root of $\Phi_{A(r,n)}(x)$ is also a root of the companion polynomial.

The companion polynomial for the expansion, $R(x)$, is computed as follows. First, an expression for the length $L$ of the expansion is determined. Next, the polynomial is written as
\[ R(x) = x^L - \sum_{a_i = 1}{x^{L-i}} \]
This expression can be simplified using a computer algebra system, such as Maple.

In the case of the expansion $(1^r 0^r)^k 1^{r-1} 0 1^{j-r} 0 (1^r 0^r)^k 1^{r-1}$, the length can be seen to be $L=4rk+r+j$. The companion polynomial is then
\begin{eqnarray*}
R(x) &=& x^{L}- \sum_{i=1}^k{\sum_{m=1}^r{x^{L-(2(i-1)r+m)}}} - \sum_{i=1}^{r-1} x^{L-(2kr+i)} - \sum_{i=1}^{j-r}x^{L-(2(k+1)r+i)} \\
&&- \sum_{i=1}^{k} \sum_{m=1}^{r}x^{L-(2(k+i-1)r+j+1+m)} - \sum_{i=1}^{r-1}x^{L-(4kr+j+1+i)}
\end{eqnarray*}
This expression simplifies to
\begin{eqnarray*}
R(x) &=& \frac{1}{\left( {x}^{2r}-1 \right)\left( x-1 \right) }\Big({x}^{4rk+r+j}-{x}^{4rk+r+j+1}-2{x}^{4rk+3r+j}+{x}^{4rk+3r+j+1} \\
&& {}-2{x}^{2r+2rk+j}+{x}^{2r+4rk+j}-{x}^{2rk+j+1} +{x}^{2rk+j+1+2r}+{x}^{2rk+r+j}-{x}^{2rk+r} \\
&&{}+{x}^{3r+2rk)}+{x}^{2rk+j}-{x}^{2r-1}+{x}^{2r+2rk-1}-{x}^{2rk+3r-1}+{x}^{2r}+{x}^{r-1}-1\Big)
\end{eqnarray*}
which then factors as 
\begin{eqnarray*}
R(x) &=& \frac{ \left( {x}^{2rk+j+r+1} -2{x}^{2rk+j+r} +{x}^{2rk+j+1} -{x}^{2rk+j} +{x}^{r} -{x}^{r-1} +1 \right) \left(x^{r(2k+1)}+1\right)} {(x-1)(x^r +1)} \\
&=&\Phi_{A(r,2rk +j)}^+(x) \cdot \left(\frac{x^{r(2k+1)}+1}{(x-1)(x^r +1)}\right)
\end{eqnarray*}
where $2rk+j = n$. Note that $x-1$ is always a cyclotomic factor of $\Phi_{A(r,n)}^+(x)$, and that
\[ \frac{x^{r(2k+1)}+1}{x^r +1} = 1 - x^r +x^{2r} - x^{3r} + \cdots + x^{2kr} \]
is a polynomial; thus the minimal polynomial of the Pisot root of $\Phi_{A(r,n)}^+(x)$ divides $R(x)$, as desired. The pseudo co-factor for the expansion is 
\[ \frac{x^{r(2k+1)}+1}{(x-1)(x^r +1)} \]
as stated in Table~\ref{expansions}.

We have thus shown that the expansion $(1^r 0^r)^k 1^{r-1} 0 1^{j-r} 0 (1^r 0^r)^k 1^{r-1}$ is a greedy expansion of 1 using the Pisot root of $\Phi_{A(r,n)}^+(x)$ as the base. The method above did not place any restrictions on the values of $n$ and $r$ in order for the stated expansion to be valid. However, we see from the expansion itself that the expansion can only apply when $j \geq r$: there is a block of $j - r$ consecutive 1s in the expansion, and the length of a block must be nonnegative. Furthermore, the initial restriction on $j$ was that $0 \leq j \leq 2r -1$. Hence the expansion can only apply when $r \leq j \leq 2r -1$, and it applies in all such cases. 

This concludes the proof of the validity of the beta expansion for the Pisot root of $\Phi_{A(r,n)}^+(x)$, where $n = 2rk+j$ and $r \leq j \leq 2r-1$. 
\end{proof}

The proofs for the remaining expansions in Table~\ref{expansions} are all similar, using the pseudo co-factors provided.

\subsection{More Complicated Expansions}
\label{complicated}

While most regular Pisot numbers have reasonably straightforward beta expansions, there are a few infinite families that have much more complex structure. The beta expansions for the Pisot roots of certain cases of $\Phi_{A(r, n)}^+(x)$, $\Phi_{B(r, n)}^-(x)$, $\Phi_{B(r, n)}^+(x)$, $\Phi_{C(r,n)}^+(x)$, $\Psi_{A(r, n)}^+(x)$, and $\Psi_{B(r, n)}^+(x)$ are examined in this section.

\subsubsection{The beta expansion of $\Phi_{A(r, n)}^+(x)$ for $1 \leq n \leq r-2$}
\label{phiAp}

The beta expansions described in Table~\ref{expansions} handle most cases for the Pisot roots of $\Phi_{A(r,n)}^+(x)$, but do not cover the case where $1 \leq n \leq r-2$. In this situation, we make use of the observation that 
\[ \Phi_{A(r, n)}^+(x) = \Phi_{A(n+1,r-1)}^+(x) \]
which can be verified algebraically. Thus when  $1 \leq n \leq r-2$, we may instead look at the beta expansion for the Pisot root of $\Phi_{A(n+1, r-1)}^+(x)$. Letting $r' = n+1$ and $n' = r-1$, we have $1 \leq r' - 2 < n$, so we can safely refer to the expansion as listed in Table~\ref{expansions}. 

\subsubsection{The beta expansion of $\Phi_{B(r,n)}^-(x)$ for $n \geq r+2$}
\label{phiBm}

When $n \geq r+2$, the general pattern of the beta expansion for the Pisot root of $\Phi_{B(r,n)}^-(x)$ is much more complicated than it is for smaller values of $n$. We construct the expansion for this family of Pisot numbers in the following way.

Let 
\[ a = \left\lceil \frac{r}{n-r} \right\rceil -1 \text{ and } k = \left\lfloor \frac{a }{ 2} \right\rfloor \]
so that $a = 2k$ or $a = 2k+1$ depending on if $a$ is even or odd, respectively. \\
Let $X$ be the string
\[ X = \prod_{i=1}^k { \left( \left( 0^{n-r-1}1 \right)^{2i-1} 0^{2ri-(2i-1)n} \left(1^{n-r-1} 0\right)^{2i} 1^{(2i+1)r-2ni} \right) }\]
and let $Y$ be the string
\begin{eqnarray*}
\lefteqn{Y =\prod_{i=1}^k  \bigg( \left( 0^{n-r-1}1 \right)^{2(k+1-i)} 0^{2(k+1-i)r - (2(k+1-i) -1)n}} \\
&&  \left(1 0^{n-r-1} \right)^{2(k+1-i)-1} 1^{(2(k+1-i)-1)r-(2(k+1-i)-2)n} \bigg) 
\end{eqnarray*} 
where $\prod$ represents the concatenation of strings.

If $a=2k$ is even, then the beta expansion is
\[ 1^r X \left(0^{n-r-1} 1 \right)^a 0^{(a+1)r-an-1} 1 0^{(a+1)n-(a+2)r} 1^{(a+1)r-an-1} Y 0^r 1 \]
and the pseudo co-factor is
\[ Q_{E(r,n,a)}(x) = \frac{\left(x^{n(a+1)} +1 \right) \left(x^{r(a+2)}-1 \right)}{\left(x-1\right)\left(x^r +1 \right)\left(x^n +1 \right)}. \]

If $a=2k$ is odd, then the beta expansion is
\begin{eqnarray*}
1^r X  \left(0^{n-r-1} 1 \right)^a 0^{(a+1)r-an} \left(1^{n-r-1} 0 \right)^a   1^{(a+1)r-an-1} 0 \\
1^{(a+1)n-(a+2)r} 0^{(a+1)r-an-1}  \left(1 0^{n-r-1} \right)^a 1^{ar - (a-1)n} Y 0^r 1
\end{eqnarray*}
and the pseudo co-factor is
\[ Q_{O(r,n,a)}(x) = \frac{\left(x^{n(a+1)} -1 \right) \left(x^{r(a+2)}+1 \right)}{\left(x-1\right)\left(x^r +1 \right)\left(x^n +1 \right)}. \]

\begin{proof}
Note that the above expansions are greedy, since they start with a block of $r$ consecutive 1s, which is also the largest set of consecutive 1s in the expansion. Thus the expansions are greedy by an application of Theorem~\ref{greed}. 

In order to prove the correctness of these expansions, it is necessary to introduce new parameters: we have that $a$ is defined by the ceiling function, which is not a simple algebraic operation, and therefore cannot be computed symbolically using the methods of Section~\ref{simple}. We thus rewrite the expressions in terms of $k$ and the new variables $M$ and $b$, defined as follows:
\begin{eqnarray*}
k &=& \left\lfloor\frac{a}{2}\right\rfloor\\
M &=& n - r \\
b &=& r - a(n-r)
\end{eqnarray*}
It can then be seen that
\begin{eqnarray*}
r &=& aM + b \\
n &=& (a+1)M + b \\
a &=& 2k \text{ or } 2k+1
\end{eqnarray*}
and so we can avoid using the ceiling definition of $a$ by switching to the variables $k$, $M$, and $b$. In order to ensure that $a=\left\lceil r/(n-r) \right\rceil -1$, we should require that $0 < b \leq M$; however, it turns out that this restriction is not necessary for the calculations.

We now construct the companion polynomials $R_{E(r,n,a,k)}(x)$ and $R_{O(r,n,a,k)}(x)$ for the expansions when $a$ is even and odd, respectively. To complete the proof, we verify the algebraic identities
\begin{eqnarray*}
\lefteqn{ R_{E(2kM+b,(2k+1)M+b,2k,k)}(x) =}  \\
 & &\Phi_{B(2kM+b,(2k+1)M+b)}^-(x) \cdot Q_{E(2kM+b,(2k+1)M+b,2k)}(x)
 \end{eqnarray*}
 and
\begin{eqnarray*} 
\lefteqn{ R_{O((2k+1)M+b,(2k+2)M+b,2k+1,k)}(x) =}  \\
 & & \Phi_{B((2k+1)M+b,(2k+2)M+b)}^-(x) \cdot Q_{E((2k+1)M+b,(2k+2)M+b,2k+1)}(x)
\end{eqnarray*}
The details for this are not shown.
\end{proof}

It is worth mentioning that when $n \geq 2r$, we have that $a = k = 0 $, and so the expansion heavily degenerates to
\[ 1^r 0^{r-1} 1 0^{n-2r} 1^{r-1} 0^{r} 1 \]
with pseudo co-factor
\[ \frac{x^r -1}{x-1}. \]
However, this situation is covered by the above general expansion, so we do not need to identify it as a separate case in Table~\ref{expansions}.

\subsubsection{The beta expansion of $\Phi_{B(r,2rk+j)}^+(x)$ for $2 \leq j \leq 2r-1$}
\label{phiBp}

When constructing the beta expansions of 1 using the Pisot roots of $\Phi_{B(r,n)}^+(x)$, it is first necessary to obtain the unique $k$ and $j$ values such that $n=(2r)k+j$. The expansion is relatively complex in the case that $2 \leq j \leq 2r-1$.

In this situation, we must determine the unique values $a$ and $b$ such that $r-1 = aj+b$, where $a \geq 0$ and $0 \leq b \leq j-1$. Note that this would not have been possible if we had $j < 2$.\\
Let $X$ be the string
\[ X= \prod_{i = 0}^a \left( \left(1^{j-1}0 \right)^i 1^{r-ij} \left(0^{j-1}1\right)^i 0^{r-ij}\right)^k \]
and let $Y$ be the string
\[ Y = \prod_{i = 0}^a \left( 0^{b+ij} \left(10^{j-1} \right)^{a-i} 1^{1+b+ij} \left(01^{j-1}\right)^{a-i} 0\right)^k \]
where $\prod$ denotes string concatenation, as before.

The desired beta expansion is then
\[ X \left(1^{j-1} 0 \right)^a 1^b 0 1^{j-b-1} Y 0^{r-1} 1 \]
with pseudo co-factor
\[ \frac{\left(x^{n(a+1)}-1\right)\left(x^{r(2(a+1)k+1)}+1\right)}
{(x-1)(x^r+1)(x^n-1)}. \]

\begin{proof}
As done in Section~\ref{phiBm}, we may express all functions in terms of the parameters $a$, $b$, $j$, and $k$ in order to complete the proof; in this manner we have
\begin{eqnarray*}
r &=& aj+b+1 \\
n &=& 2(aj+b+1)k+j 
\end{eqnarray*} 
The remainder of the proof is straightforward; greediness of the expansion follows directly from Theorem~\ref{greed} if $k = 0$, and from Lemma~\ref{greedyLemma} if $k \geq 1$. When applying the lemma, we use the prefix $X = \left(1^r 0^r \right)^k 1^*$, where $1^*$ represents the maximal block of consecutive 1s available at that point in the expansion (the size of this block depends on the values of $a$, $b$, $j$, and $k$).
\end{proof}

\subsubsection{The beta expansion of $\Phi_{C(r, 2rk+j)}^+(x)$ for $j < r$ and $k = 0$}
\label{phiCp}

The beta expansion of 1 using the Pisot root of $\Phi_{C(r,n)}^+(x)$ depends on the values of $k$ and $j$, which are uniquely defined by the equation $n = (2r)k + j$ with $k \geq 0$ and $0 \leq j \leq 2r-1$. The case when $j < r$ and $k = 0$ is therefore equivalent to saying that $n < r$.

The polynomial $\Phi_{C(r,n)}^+(x)$ has the following nice symmetric property:
\[ \Phi_{C(r,n)}^+(x) = \Phi_{C(n,r)}^+(x) \]
Thus by swapping the values of $n$ and $r$, it is sufficient to describe the expansions when $r \geq n$, which was done in Table~\ref{expansions}.

\subsubsection{The beta expansion of $\Psi_{A(r, n)}^+(x)$}
\label{psiAp}

Observe that for any choice of $r, n \geq 1$, the following identity holds:
\[ \Psi_{A(r,n)}^+(x) = \frac{\Phi_{B(n,r+1)}^+(x)}{x-1} \]
Since $x-1$ is always a cyclotomic factor of $\Phi_{B(n,r+1)}^+(x)$, it is clear that the Pisot root of $\Psi_{A(r,n)}^+(x)$ is the same as the Pisot root of $\Phi_{B(n,r+1)}^+(x)$. Thus the beta expansion for the Pisot root of $\Psi_{A(r, n)}^+(x)$ can be obtained by referring to the beta expansion for the corresponding root of $\Phi_{B(n,r+1)}^+(x)$, detailed in Section~\ref{phiBp}.

It is worth mentioning here that in the general case where $r$ is fixed and $n > (r+1)/2$, the expansion takes the form 
\[ \left(1^r 0 \right)^k 1^j 0 1^{r-j} 0^{n-1} 1 \]
where $k \geq 0$ and $0 \leq j \leq r$ are the unique integers such that $n-1 = (r+1)k+j$. This expansion is straightforward to deduce from the result in Section~\ref{phiBp}.

\subsubsection{The beta expansion of $\Psi_{B(r, n)}^+(x)$}
\label{psiBp}

Let $L = \text{lcm}(r+1, n+1)-1$, so that $L+1$ is the least common multiple of $r+1$ and $n+1$. Then the greedy beta expansion of 1 using this Pisot number is
\[ a_1 a_2 a_3 \cdots a_L\]
where
\[ 
  a_i =
  \begin{cases}
    0, & \text{if } (r+1)|i \text{ or } (n+1)|i; \\
    1, & \text{otherwise.} \\
  \end{cases}
\]

\begin{proof}
The companion polynomial for this unusual expansion is obtained using the expression
\begin{eqnarray*}
  R(x) &=& x^L - \sum_{i=0}^{L-1} x^{L-i} + \sum_{a_i = 0} x^{L-i} \\
       &=& x^L - \sum_{i=0}^{L-1} x^{i} + \left( \sum_{i = 1}^{\frac{L+1}{r+1}-1} x^{L-(r+1)i} + \sum_{i = 1}^{\frac{L+1}{n+1}-1} x^{L-(n+1)i} \right)
\end{eqnarray*}
where, instead of subtracting terms for each 1 that appears in the expansion, we add terms for each 0 that occurs.

The pseudo co-factor for this expansion is then
\[ \frac{x^{L+1}-1}{(x^{n+1}-1)(x^{r+1}-1)}. \]

Since the expansion is not presented explicitly, it is slightly more difficult to confirm that the expansion satisfies the greediness property of Theorem~\ref{greed}. Note first that the polynomial $\Psi_{B(r,n)}^+(x)$ (and hence, the corresponding beta expansion) satisfies a nice symmetric property:
\[ \Psi_{B(r,n)}^+(x) = \Psi_{B(n,r)}^+(x) \]
Thus we may assume without loss of generality that $n \geq r$. In this case, it can be seen that the expansion begins with the prefix $X = \left(1^r 0 \right)^k 1^j$ where $k \geq 0$ and $0 \leq j < r+1$ are the unique integer solutions to $n = (r+1)k + j$. The condition of Lemma~\ref{greedyLemma} follows directly upon observing that every non-leading substring of the expansion is strictly lexicographically smaller than this prefix.
\end{proof}

\subsection{Pattern Visualization}
\label{graphics}

For a new and interesting perspective, we now present the beta expansions in a different format. The expansions as written in the previous sections were written as strings, and although this is a highly efficient and precise representation, it may not be completely illuminating. It may be easier to observe and identify the underlying patterns in beta expansions when they are presented graphically.

The following figures show the beta expansions for each family of regular Pisot numbers. In the images below, each row of pixels corresponds to the greedy expansion of a particular Pisot number. The top row of pixels in each image represents the beta expansion using the root of the polynomial in the caption name, where $n = 1$; the value of $n$ increases as the row count does, until the bottom row of pixels in the image represents the beta expansion corresponding to $n = 100$. The $r$ value was arbitrarily fixed to equal $20$, as the general form of the image is roughly the same for all values of $r$ (the exception is the beta expansion for $\Psi_{B(r,n)}^+(x)$, where the expansion depends on the number of divisors of $r+1$). Within each row, a green pixel represents the digit ``0'', and a black pixel represents the digit ``1''. Note that only the first 300 digits of each expansion are shown, as this is the width of each image; thus some expansions have been truncated.

\begin{figure}[H]
  \caption{Expansions using the Pisot root of $\Phi_{A(20,n)}^-(x)$ and $\Phi_{A(20,n)}^+(x)$.}
  \centering
\includegraphics[height=75px]{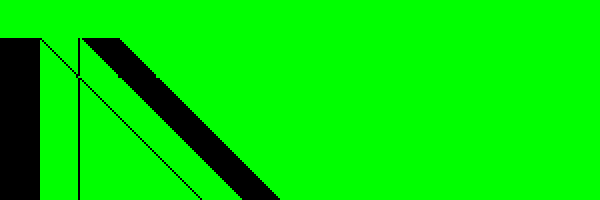}
\includegraphics[height=75px]{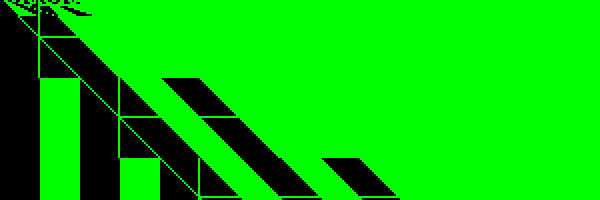}
\end{figure}
\begin{figure}[H]
  \caption{Expansions using the Pisot root of $\Phi_{B(20,n)}^-(x)$ and $\Phi_{B(20,n)}^+(x)$.}
  \centering
\includegraphics[height=75px]{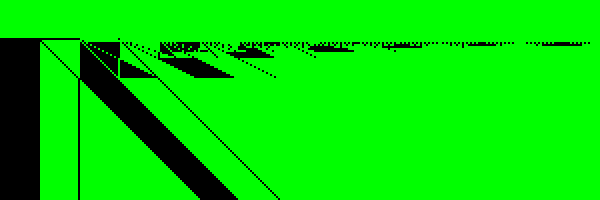}
\includegraphics[height=75px]{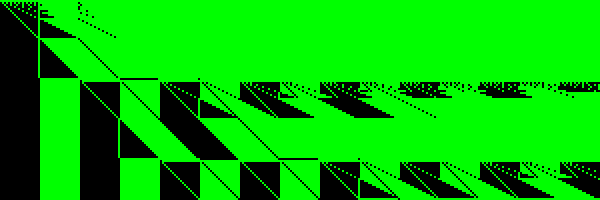}
\end{figure}
\begin{figure}[H]
  \caption{Expansions using the Pisot root of $\Phi_{C(20,n)}^-(x)$ and $\Phi_{C(20,n)}^+(x)$.}
  \centering
\includegraphics[height=75px]{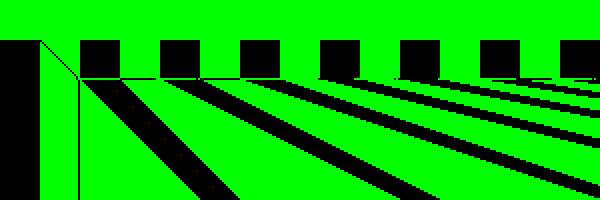}
\includegraphics[height=75px]{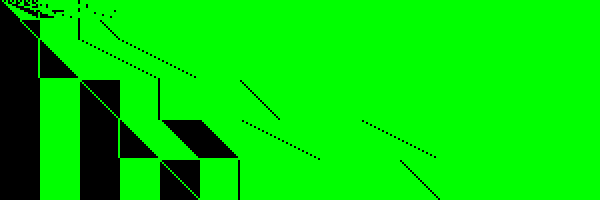}
\end{figure}
\begin{figure}[H]
  \caption{Expansions using the Pisot root of $\Psi_{A(20,n)}^-(x)$ and $\Psi_{A(20,n)}^+(x)$.}
  \centering
\includegraphics[height=75px]{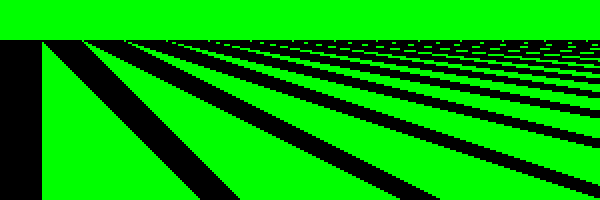}
\includegraphics[height=75px]{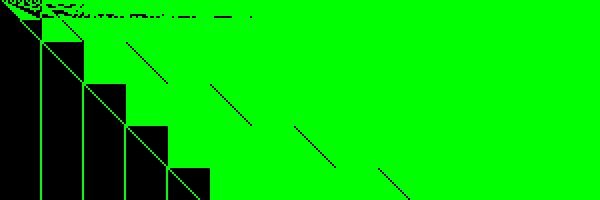}
\end{figure}
\begin{figure}[H]
  \caption{Expansions using the Pisot root of $\Psi_{B(20,n)}^-(x)$ and $\Psi_{B(20,n)}^+(x)$.}
  \centering
\includegraphics[height=75px]{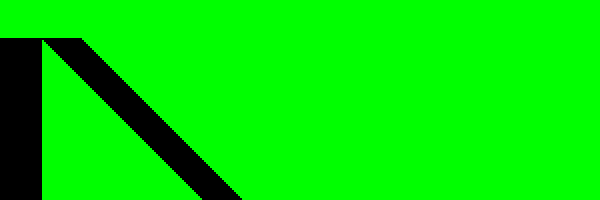}
\includegraphics[height=75px]{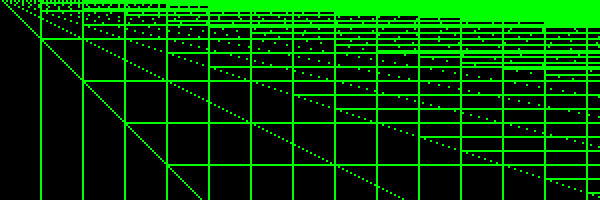}
\end{figure}
\begin{figure}[H]
  \caption{Expansions using the Pisot root of $\mathcal{X}_{A(n)}^-(x)$ and $\mathcal{X}_{A(n)}^+(x)$.}
  \centering
\includegraphics[height=75px]{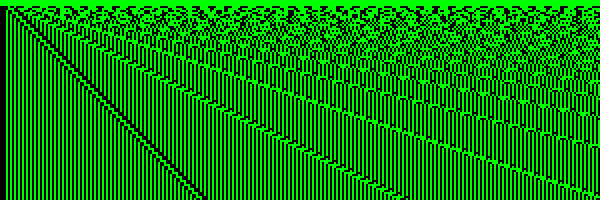}
\includegraphics[height=75px]{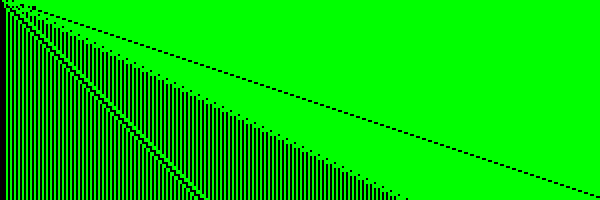}
\end{figure}
\begin{figure}[H]
  \caption{Expansions using the Pisot root of $\mathcal{X}_{B(n)}^-(x)$ and $\mathcal{X}_{B(n)}^+(x)$.}
  \centering
\includegraphics[height=75px]{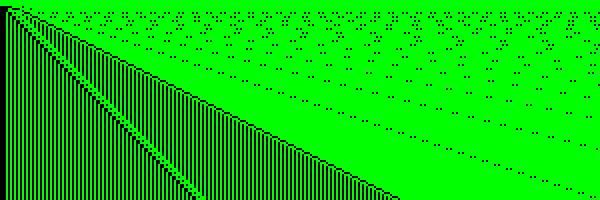}
\includegraphics[height=75px]{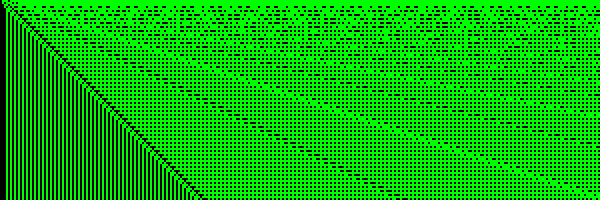}
\end{figure}
\begin{figure}[H]
  \caption{Expansions using the Pisot root of $\Phi_{n}(x)$ and $\Psi_{n}(x)$.}
  \centering
\includegraphics[height=75px]{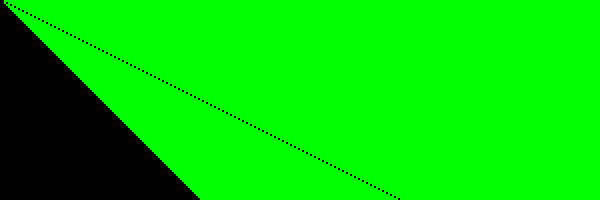}
\includegraphics[height=75px]{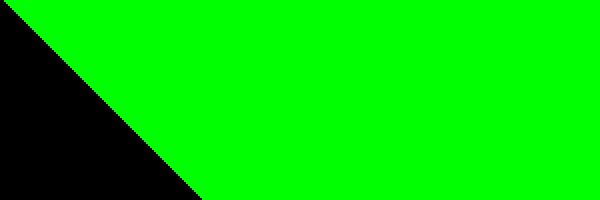}
\end{figure}
 
Note that $\mathcal{X}(x)$ is a single polynomial, rather than a sequence indexed by $n$; thus there is only one unique expansion produced in this case, which is not shown here.

\section{Consequences and Open Questions}

Now that the complete classification of regular Pisot numbers less than 2 is available, we can answer some conjectures given in earlier works.

Boyd \cite{MR1325863} had shown that for the regular Pisot numbers approaching $\phi_r$ and $\psi_r$ with $r \leq 4$, the co-factor would always be a product of cyclotomic polynomials; he conjectured that this would be true for all $r$. We can now see from Table~\ref{expansions} that the pseudo co-factors corresponding to $\Phi$ and $\Psi$ polynomials are always ratios of cyclotomic polynomials, implying that the original co-factors are indeed products of cyclotomic polynomials, as conjectured. We can also verify that non-cyclotomic co-factors only appear for the Pisot roots of $\mathcal{X}_{B(n)}^-(x)$ when $n \geq 5$ is odd, and for the Pisot roots of $\mathcal{X}_{B(n)}^+(x)$ when $n \geq 4$ is even. These non-cyclotomic polynomials are also non-reciprocal, as Boyd predicted.

Hare and Tweedle \cite{MR2444222} did a study on regular Pisot numbers satisfying the FRG property, which is defined as follows:
\begin{definition}
A Pisot number $\beta$ has a \emph{finite reversibly greedy (FRG)} beta expansion if
\begin{enumerate}
\item $d_\beta(1) = a_1 a_2 \cdots a_k$ is finite, and
\item $a_1 a_2 \cdots a_k >_{lex} a_{k-i} a_{k-i-1} a_{k-i-2} \cdots a_2$ for all $i$ with $0 \leq i \leq k-2$.
\end{enumerate}
\end{definition}

It was shown that beta expansions for certain algebraic integers could be determined if Pisot numbers had FRG beta expansions. A computational search had been done for Pisot numbers of small degree with expansions satisfying the FRG property \cite{MR2444222}, and several infinite families of numbers with the property were found. Using the results of this paper, many more classes of Pisot numbers with the FRG property can now be identified. 

We can see the following from the results in Section~\ref{results}:
\begin{itemize}
\item The expansions for the Pisot roots of $\Phi_{A(r,n)}^-(x)$, $\Phi_{C(r,n)}^+(x)$, $\Psi_{B(r,n)}^-(x)$, $\Psi_{B(r,n)}^+(x)$, $\mathcal{X}_{A(n)}^+(x)$, $\Phi_r(x)$, and $\Psi_r(x)$ always have the FRG property. The first four of these were guessed, but not proven, by Hare and Tweedle \cite{MR2444222}.
\item The expansion for the Pisot root of $\Phi_{A(r, 2rk+j)}^+(x)$ has the FRG property when $0 \leq j \leq r-1$.
\item The expansion for the Pisot root of $\Phi_{B(r,n)}^-(x)$ has the FRG property when $n = r$ or $n = r + 1$, or when $n \geq r+2$ and $a = \lceil r/(n-r) \rceil -1$ is even.
\item The expansion for the Pisot root of $\Phi_{B(r, 2rk+j)}^+(x)$ has the FRG property when $j = 0$ or $j = 1$. 
\item The expansion for the Pisot root of $\mathcal{X}_{B(n)}^-(x)$ has the FRG property when $n = 5$ or $n \geq 4$ is even.
\item The expansion for the Pisot root of $\mathcal{X}_{B(n)}^+(x)$ has the FRG property when $n = 2$ or $n \geq 1$ is odd.
\item The expansion for the Pisot root of $\Phi_{A(r, 2rk+j)}^+(x)$ is finite, but does not have the FRG property, when $r \leq j \leq 2r -1$.
\item The expansion for the Pisot root of $\Phi_{B(r,n)}^-(x)$ is finite, but does not have the FRG property, when $n \geq 2$ and $a = \lceil r/(n-r) \rceil -1$ is odd.
\item The expansion for the Pisot root of $\Phi_{B(r,2rk+j)}^+(x)$ is finite, but does not have the FRG property, when $2 \leq j \leq 2r -1$.
\item The expansions for the remaining regular Pisot numbers in the interval $(1,2)$ are not finite, and therefore do not have the FRG property.
\end{itemize}

An interesting question is to determine, given $d \geq 1$, the fraction of regular Pisot numbers of degree $\leq d$ that have expansions satisfying the FRG property. Although we know which expansions have the property, the answer to this question is not immediately apparent. This is because knowing the values of $n$ and $r$ does not give us the degree of the minimal polynomial for a regular Pisot number, and also because of the complicated indexing system used to organize the expansions in Table~\ref{expansions}.

It is an open problem to determine the beta expansions for the remaining Pisot numbers. We did not look at the beta expansions of the irregular Pisot numbers in the interval $(1,2)$, nor did we consider the Pisot numbers larger than 2. Another possible approach would be to consider a non-greedy expansion of 1 using the Pisot number as the base; in particular, there should be some classifiable patterns when looking at the lazy expansions (which are the lexicographically smallest beta expansions).

There are also some unexplained mysteries concerning the beta expansions described in this paper. For instance, the beta expansion for Pisot numbers approaching $\phi_r$ from above frequently depends on the value of the remainder when $n$ is divided by $2r$. Similarly, the expansion for Pisot numbers approaching $\psi_r$ from above also depends on the value of the remainder when $n$ is divided by $r+1$. It is unclear why a relationship exists between the modulus pattern of the expansions and the length of the beta expansion in the limit (as the greedy expansions of $\phi_r$ and $\psi_r$ have lengths $2r$ and $r+1$, respectively). There are several other curiosities that appear in the patterns of beta expansions, which although carefully observed, do not have a satisfactory explanation as of yet. It would be interesting to find some justification as to why some of the patterns and relationships appear, as this could perhaps be used to obtain more general results, such as the greedy beta expansions of 1 using other numbers as the base.

\section*{Acknowledgements}
I would like to thank my supervisor, Dr.\ Kevin Hare, for his helpful support and guidance.

\bibliographystyle{abbrv}
\bibliography{Sources}

\begin{thebibliography}{10}

\bibitem{MR0237459}
M.~Amara.
\newblock Ensembles ferm\'es de nombres alg\'ebriques.
\newblock {\em Ann. Sci. \'Ecole Norm. Sup. (3)}, 83:215--270 (1967), 1966.

\bibitem{MR1966122}
F.~Bassino.
\newblock Beta-expansions for cubic {P}isot numbers.
\newblock In {\em L{ATIN} 2002: {T}heoretical informatics ({C}ancun)}, volume
  2286 of {\em Lecture Notes in Comput. Sci.}, pages 141--152. Springer,
  Berlin, 2002.

\bibitem{MR0447134}
A.~Bertrand.
\newblock D\'eveloppements en base de {P}isot et r\'epartition modulo {$1$}.
\newblock {\em C. R. Acad. Sci. Paris S\'er. A-B}, 285(6):A419--A421, 1977.

\bibitem{MR1325863}
D.~W. Boyd.
\newblock On beta expansions for {P}isot numbers.
\newblock {\em Math. Comp.}, 65(214):841--860, 1996.

\bibitem{MR2024974}
N.~Gjini.
\newblock Beta-expansion of 1 for quartic {P}isot units.
\newblock {\em Period. Math. Hungar.}, 47(1-2):73--87, 2003.

\bibitem{MR2427721}
K.~G. Hare.
\newblock Beta-expansions of {P}isot and {S}alem numbers.
\newblock In {\em Computer algebra 2006}, pages 67--84. World Sci. Publ.,
  Hackensack, NJ, 2007.

\bibitem{MR2444222}
K.~G. Hare and D.~Tweedle.
\newblock Beta-expansions for infinite families of {P}isot and {S}alem numbers.
\newblock {\em J. Number Theory}, 128(9):2756--2765, 2008.

\bibitem{MR0142719}
W.~Parry.
\newblock On the {$\beta $}-expansions of real numbers.
\newblock {\em Acta Math. Acad. Sci. Hungar.}, 11:401--416, 1960.

\bibitem{MR0097374}
A.~R{\'e}nyi.
\newblock Representations for real numbers and their ergodic properties.
\newblock {\em Acta Math. Acad. Sci. Hungar}, 8:477--493, 1957.

\bibitem{MR576976}
K.~Schmidt.
\newblock On periodic expansions of {P}isot numbers and {S}alem numbers.
\newblock {\em Bull. London Math. Soc.}, 12(4):269--278, 1980.

\bibitem{MR516773}
F.~L. Talmoudi.
\newblock Sur les nombres de {$S\cap [1,2[$}.
\newblock {\em C. R. Acad. Sci. Paris S\'er. A-B}, 287(10):A739--A741, 1978.

\end{thebibliography}

\bigskip
\hrule
\bigskip

\noindent 2000 {\it Mathematics Subject Classification}:
Primary 11A63; Secondary 11B83.

\noindent \emph{Keywords: } Pisot number, beta expansion.

\end{document}